\theoremstyle{thmstyleone}%
\newtheorem{theorem}{Theorem}
\theoremstyle{thmstyletwo}%
\newtheorem{example}{Example}%
\newtheorem{lemma}{Lemma}
\theoremstyle{thmstylethree}%
\newcommand{\R}{\mathbb{R}}
\newcommand{\N}{\mathbb{N}}
\newcommand{\tilh}{h_{\Tilde{c}}}
\newcommand{\tilc}{\Tilde{c}}
\newcommand{\tilv}{\Tilde{v}}
\newcommand{\bart}{\Bar{t}}
\newcommand{\barv}{\Bar{v}}
\newcommand{\cmin}{c_{\text{min}}}
\DeclareMathOperator*{\conv}{conv}
\DeclareMathOperator*{\pr}{pr}
\DeclareMathOperator*{\argmin}{arg\,min}
\begin{document}

\title[A note on the convergence of deterministic gradient sampling]{A note on the convergence of deterministic gradient sampling in nonsmooth optimization}


\author*[1]{\fnm{Bennet} \sur{Gebken}}\email{bgebken@math.upb.de}

\affil*[1]{\orgdiv{Insitute of Mathematics}, \orgname{Paderborn University}, \orgaddress{\street{Warburger Str. 100}, \city{Paderborn}, \postcode{33098}, \country{Germany}}}


\abstract{
    Approximation of subdifferentials is one of the main tasks when computing descent directions for nonsmooth optimization problems. In this article, we propose a bisection method for weakly lower semismooth functions which is able to compute new subgradients that improve a given approximation in case a direction with insufficient descent was computed. Combined with a recently proposed deterministic gradient sampling approach, this yields a deterministic and provably convergent way to approximate subdifferentials for computing descent directions.
}

\keywords{Nonsmooth optimization, Nonsmooth analysis, Nonconvex optimization, Gradient sampling}



\maketitle

\section{Introduction}

Nonsmooth optimization is concerned with the optimization of a function $f : \R^n \rightarrow \R$ which is not necessarily continuously differentiable. For such functions, one cannot rely on the gradient for describing the local behavior around a given point. As a replacement, generalized concepts from nonsmooth analysis can be employed. If the objective is still locally Lipschitz continuous, as is the case in many practical applications, then the standard approach is to use the \emph{Clarke subdifferential} $\partial f$ \cite{C1990}. However, since $\partial f(x)$ reduces to the gradient if $f$ is continuously differentiable at $x$, and since $f$ is typically continuously differentiable almost everywhere, the Clarke subdifferential cannot be used to capture nonsmoothness numerically. To circumvent this issue, the \emph{(Goldstein)} $\varepsilon$-\emph{subdifferential} $\partial_\varepsilon f$ \cite{G1977} may be used instead, which is the convex hull of all Clarke subdifferentials in an $\varepsilon$-ball around a given point. For the $\varepsilon$-subdifferential, it is sufficient for $x$ to have a distance of at most $\varepsilon$ to a nonsmooth point of $f$ to capture the nonsmoothness. As such, it can be interpreted as a stabilized version of the Clarke subdifferential. 

It is well-known that the element $\barv \in -\partial_\varepsilon f(x_0)$ with the smallest norm is a descent direction for $f$ at $x_0$ \cite{G1977,C1990}. Unfortunately, the full $\varepsilon$-subdifferential that is required to compute this direction is rarely available in practice and has to be approximated instead. To this end, in the \emph{gradient sampling method} \cite{BLO2002,BLO2005,BCL2020}, the idea is to approximate $\partial_\varepsilon f(x_0)$ by the convex hull of the gradients at randomly generated points in the $\varepsilon$-ball around $x_0$ where $f$ is differentiable. While this is easy to implement and convergence can be shown with probability $1$, randomly computing gradients means that one generally computes more gradients than would be necessary. 
Furthermore, a good approximation may require a large and a priori unknown number of sample points (which is highlighted in Appendix \ref{sec:appendix}).
As an alternative, in \cite{MY2012,GP2021,G2022}, a deterministic sampling approach was used. There, the idea is to compute the approximation of $\partial_\varepsilon f(x_0)$ iteratively, by starting with $W = \{ \xi \}$ for a subgradient $\xi \in \partial f(x_0)$ at $x_0$, and then adding new elements of $\partial_\varepsilon f(x_0)$ to $W$ until $\conv(W)$ is a satisfactory approximation. The mechanism for finding new elements of $\partial_\varepsilon f(x_0)$ is based on the observation that if $v$ is a direction that yields less descent than expected (based on the current approximation $\conv(W)$), then there has to be a point $x_0 + t v$ with $t > 0$ in the $\varepsilon$-ball at which a new subgradient $\xi' \in \partial f(x_0 + tv) \subseteq \partial_\varepsilon f(x_0)$ with $\xi' \notin \conv(W)$ can be sampled. To find such a $t$, a subroutine based on bisection of the interval $(0,\varepsilon/\| v \|)$ is used. While in \cite{MY2012,GP2021,G2022}, it was analyzed why this subroutine likely works (i.e., terminates) in practice and while termination was also observed in all numerical examples, a full proof (under reasonable assumptions for $f$) was not given.

The goal of this note is to close the above mentioned gap in the convergence theory of the deterministic gradient sampling approach. The bisection algorithm in \cite{MY2012,GP2021,G2022} is based on reformulating the problem of finding a new element of $\partial_\varepsilon f(x_0)$ as finding a point $t > 0$ in which a certain nonsmooth function $h : \R \rightarrow \R$ is increasing. The convergence issues arise in cases where the bisection converges to a critical point $\bart$ of $h$ (i.e., to a point $\bart$ with $0 \in \partial h(\bart)$). To fix these issues, we replace $h$ by a slightly modified function $\tilh$. We then show convergence of the resulting method for the case where $f$ is weakly lower semismooth \cite{M1977a,LO2013} (meaning that $-f$ is weakly upper semismooth). Since semismooth functions \cite{M1977b} are weakly lower semismooth, this case includes continuously differentiable functions, convex functions and piecewise differentiable functions \cite{SS2008}. As our result is essentially just concerned with the deterministic computation of new $\varepsilon$-subgradients (not necessarily in the context of \cite{MY2012,GP2021,G2022}), it has also use in other methods based on gradient sampling, like \cite{CQ2013,CQ2015}. Our method has strong similarities to Procedure 4.1 in \cite{K2009}, which is used for the computation of new subgradients in a bundle framework. 
It is based on the same idea, but differs in the condition used for bisection and the stopping criterion.

The remainder of this article is structured as follows: In Section \ref{sec:descent_directions_for_nonsmooth_functions} we introduce the basics of gradient sampling and the bisection algorithm from \cite{GP2021} (which is identical to the one in \cite{G2022} and almost identical to the one in \cite{MY2012}). In \cite{GP2021}, the bisection was only a small part of a larger algorithm, but we will introduce it here in a stand-alone way for convenience. In Section \ref{sec:improved_bisection_method} we construct the improved bisection algorithm and show its convergence when $f$ is weakly lower semismooth. In Section \ref{sec:Examples} we visualize the behavior of the improved bisection method in a simple example. Finally, in Section \ref{sec:conclusion}, we summarize our results and discuss possible directions for future research.

\section{Computing descent directions for nonsmooth functions} \label{sec:descent_directions_for_nonsmooth_functions}

In this section, we summarize the basic ideas of gradient sampling from \cite{BLO2002,BLO2005,BCL2020,MY2012,GP2021,G2022}. To this end, let $f : \R^n \rightarrow \R$ be locally Lipschitz continuous. The \emph{Clarke subdifferential} \cite{C1990} of $f$ at $x \in \R^n$ is given by
\begin{equation} \label{eq:def_clarke_subdiff}
    \begin{aligned}
        \partial f(x) := \conv &\left( \left\{ \xi \in \R^n : \exists (x^j)_j \in \R^n \setminus \Omega \text{ with } \lim_{j \rightarrow \infty} x^j = x \text{ and } \right. \right. \\
        &\left. \left. \lim_{j \rightarrow \infty} \nabla f(x^j) = \xi \right\} \right),
    \end{aligned}
\end{equation}
where $\Omega \subseteq \R^n$ is the set of points at which $f$ is not differentiable. (By Rademacher's Theorem \cite{C1990}, $\Omega$ has measure zero.) The elements of $\partial f(x)$ are called \emph{(Clarke) subgradients}. In theory, the Clarke subdifferential can be used similarly to the standard gradient, as it can be used in generalized versions of results like the mean value theorem, the chain rule and optimality conditions \cite{C1990}. In practice however, there are severe problems: $\partial f(x)$ only captures the nonsmoothness of $f$ if $x$ is a point where $f$ is not continuously differentiable, which is typically a null set. Furthermore, when $x \in \Omega$, there is no general way of computing the full subdifferential, i.e., all subgradients. Instead, a reasonable assumption is that we can only evaluate a single, arbitrary subgradient from $\partial f(x)$ at any $x \in \R^n$. Further explanations and examples for these issues can be found in \cite{L1989}.
To circumvent these problems, a more suitable object to use as a generalized derivative for constructing descent methods is the \emph{(Goldstein) $\varepsilon$-subdifferential} \cite{G1977}
\begin{align*}
    \partial_\varepsilon f(x) := \conv \left( \bigcup_{y \in B_\varepsilon(x)} \partial f(y) \right),
\end{align*}
where $\varepsilon \geq 0$, $B_\varepsilon(x) := \{ y \in \R^n : \| y - x \| \leq \varepsilon \}$ and $\| \cdot \|$ is the Euclidean norm. The elements of $\partial_\varepsilon f(x)$ are called $\varepsilon$\emph{-subgradients}. The $\varepsilon$-subdifferential can be interpreted as a ``stabilized'' Clarke subdifferential. 
In particular, it can be used to compute descent directions, as we demonstrate in the following.

Let $x_0 \in \R^n$, $v \in \R^n \setminus \{ 0 \}$ and $\varepsilon > 0$. Then a simple application of the mean value theorem (\cite{C1990}, Theorem 2.3.7) shows that
\begin{align} \label{eq:descent_via_mvt}
    f \left( x_0 + t v \right) \leq f(x_0) + t \max_{\xi \in \partial_\varepsilon f(x_0)} \langle \xi, v \rangle \quad \forall t \in \left( 0, \frac{\varepsilon}{\| v \|} \right].
\end{align}
Thus, directions $v$ with $\langle \xi, v \rangle < 0$ for all $\xi \in \partial_\varepsilon f(x_0)$ are descent directions for $f$ at $x_0$. Based on convex analysis \cite{CG1959}, 
the direction that minimizes the maximum of the inner products on the right-hand side of \eqref{eq:descent_via_mvt}, called the \emph{$\varepsilon$-steepest descent direction},
can be computed as
\begin{align} \label{eq:qop_bar_v}
    \barv := -\argmin_{\xi \in \partial_\varepsilon f(x_0)} \| \xi \|^2.
\end{align}
It holds either $\barv = 0$, in which case $0 \in \partial_\varepsilon f(x_0)$ and $x_0$ is called $\varepsilon$\emph{-critical}, or
\begin{align} \label{eq:bar_v_ineq}
    \langle \xi, \barv \rangle \leq - \| \barv \|^2 < 0 \quad \forall \xi \in \partial_\varepsilon f(x_0)
\end{align}
and, due to \eqref{eq:descent_via_mvt}, 
\begin{align} \label{eq:armijo_bar_v}
    f \left( x_0 + \frac{\varepsilon}{\| \barv \|} \barv \right) \leq f(x_0) - \varepsilon \| \barv \|.
\end{align}
Unfortunately, the full $\varepsilon$-subdifferential required to solve Problem \eqref{eq:qop_bar_v} is rarely available in practice. Thus, the direction $\barv$ has to be approximated.

\subsection{Random gradient sampling}

In the standard gradient sampling framework \cite{BLO2002,BLO2005,BCL2020}, the $\varepsilon$-subdifferential is approximated by randomly (independently and uniformly) sampling $m \geq n+1$ elements $x^1, \dots, x^m \in B_\varepsilon(x_0) \setminus \Omega$ and setting
\begin{align*}
    W := \{ \nabla f(x_0), \nabla f(x^1), \dots, \nabla f(x^m) \} \subseteq \partial_\varepsilon f(x_0).
\end{align*}
(The differentiability of $f$ at the current iterate $x_0$ is enforced via a differentiability check.) As an approximation of $\barv$ from \eqref{eq:qop_bar_v}, the direction
\begin{align} \label{eq:def_vGS}
    v^{\text{GS}} := -\argmin_{\xi \in \conv(W)} \| \xi \|^2
\end{align}
is computed, and an Armijo-like backtracking line search is used to assure decrease in $f$. It can be shown that when dynamically reducing the sampling radius $\varepsilon$, then the resulting descent method (cf. \cite{BLO2005}, GS Algorithm) produces a sequence converging to a critical point of $f$ with probability $1$. Unfortunately, sampling randomly means that a large number of sample points $m$ may be required to assure that $v^{\text{GS}}$ is a good approximation of $\barv$, i.e., to assure that meaningful decrease is achieved in every descent step. This drawback is highlighted in Appendix \ref{sec:appendix}. 

\subsection{Deterministic gradient sampling}
\label{subsec:deterministic_gradient_sampling}

Instead of randomly sampling points from $B_\varepsilon(x_0)$ for the approximation of $\partial_\varepsilon f(x_0)$, there is also a deterministic approach \cite{GP2021}. (In \cite{GP2021} \emph{multiobjective} problems are considered, but we will only consider the special case of a single objective here.) Assume that $x_0$ is not $\varepsilon$-critical. The idea is to start with a subset $W \subseteq \partial_\varepsilon f(x_0)$ (e.g., $W = \{ \xi \}$ for $\xi \in \partial f(x_0)$) and to then iteratively add new subgradients to $W$ until a direction 
\begin{align} \label{eq:def_tilv}
    \tilv := -\argmin_{\xi \in \conv(W)} \| \xi \|^2
\end{align}
is found that yields sufficient descent. The meaning of ``sufficient descent'' can be derived from \eqref{eq:armijo_bar_v}: For some fixed $c \in (0,1)$, we want to have at least
\begin{align} \label{eq:sufficient_descent}
    f \left( x_0 + \frac{\varepsilon}{\| \tilv \|} \tilv \right) \leq f(x_0) - c \varepsilon \| \tilv \|.
\end{align}
To find a new subgradient (that is not already contained in $\conv(W)$) in case $\tilv$ does not yield sufficient descent, note that the mean value theorem implies that there are $t' \in ( 0, \varepsilon / \| \tilv \| )$ and $\xi' \in \partial f(x_0 + t' \tilv)$ with
\begin{align} \label{eq:new_subgradient_ineq}
    &\frac{\varepsilon}{\| \tilv \|} \langle \xi', \tilv \rangle = f \left( x_0 + \frac{\varepsilon}{\| \tilv \|} \tilv \right) - f(x_0) > -c \varepsilon \| \tilv \| \notag \\
    \Leftrightarrow \quad &  \langle \xi', \tilv \rangle > -c \| \tilv \|^2.
\end{align}
Analogously to \eqref{eq:qop_bar_v} and \eqref{eq:bar_v_ineq}, it holds
\begin{align*}
    \langle \xi, \tilv \rangle \leq -\| \tilv \|^2 < 0 \quad \forall \xi \in \conv(W).
\end{align*}
Thus, \eqref{eq:new_subgradient_ineq} implies that $\xi' \notin \conv(W)$, so adding $\xi'$ to $W$ improves the approximation of $\partial_\varepsilon f(x_0)$. In \cite{MY2012,GP2021}, it was proven that iteratively computing $\tilv$ via \eqref{eq:def_tilv} while adding new subgradients to $W$ as above yields a finite algorithm which deterministically computes descent directions satisfying \eqref{eq:sufficient_descent} (as long as  $0 \notin \partial_\varepsilon f(x_0)$). (For a formal definition of this algorithm, see Algorithm 2 in \cite{GP2021} for $k = 1$.)

Unfortunately, the above application of the mean value theorem does not yield an explicit formula for the computation of $\xi'$ as in \eqref{eq:new_subgradient_ineq}, so additional effort is required in practice. To this end, a strategy based on bisection can be used: Let 
\begin{align} \label{eq:def_h}
    h : \R \rightarrow \R, \quad t \mapsto f(x_0 + t \tilv) - f(x_0) + c t \| \tilv \|^2.
\end{align}
By the chain rule (\cite{C1990}, Theorem 2.3.9) it holds
\begin{align} \label{eq:chain_rule_h}
    \partial h(t) \subseteq \langle \partial f(x_0 + t \tilv), \tilv \rangle + c \| \tilv \|^2,
\end{align}
so $\partial h(t') \cap \R^{>0} \neq \emptyset$ for $t' \in (0,\varepsilon / \| \tilv \|)$ would imply that there is some $\xi' \in \partial f(x_0 + t' \tilv)$ as in \eqref{eq:new_subgradient_ineq}. Thus, roughly speaking, the idea is to search for some interval in $(0,\varepsilon / \| \tilv \|)$ on which $h$ is monotonically increasing. In \cite{GP2021} this was done via Algorithm \ref{algo:old_bisection}.
\begin{algorithm}
	\caption{Computation of new $\varepsilon$-subgradients}
	\label{algo:old_bisection}
	\begin{algorithmic}[1] 
		\Require Point $x_0 \in \R^n$, radius $\varepsilon > 0$, descent parameter $c \in (0,1)$, direction $\tilv \in \R^n \setminus \{ 0 \}$ violating \eqref{eq:sufficient_descent}.
		\State Initialize $j = 1$, $a_1 = 0$, $b_1 = \frac{\varepsilon}{\| \tilv \|}$ and $t_1 = \frac{1}{2} (a_1 + b_1)$.
		\State Compute $\xi' \in \partial f(x_0 + t_j \tilv)$.
		\State If $\langle \xi', \tilv \rangle > -c \| \tilv \|^2$ then stop.
		\State If $h(b_j) > h(t_j)$ then set $a_{j+1} = t_j$ and $b_{j+1} = b_j$. Otherwise set $a_{j+1} = a_j$ and $b_{j+1} = t_j$. 
		\State Set $t_{j+1} = \frac{1}{2}(a_{j+1}+b_{j+1})$, $j = j+1$ and go to step 2.
	\end{algorithmic} 
\end{algorithm}
It performs bisections such that $h(a_j) < h(b_j)$ for all $j \in \N$, while checking whether a $\xi'$ was found satisfying \eqref{eq:new_subgradient_ineq}. In \cite{MY2012,GP2021,G2022}, it was argued why the algorithm is likely to terminate in practice, and termination was also observed in all numerical examples, but a proper analysis was not carried out. There are basically two issues that may cause the algorithm not to terminate:
\begin{enumerate}
    \item One may never encounter a $t_j$ with $\partial h(t_j) \cap \R^{\geq 0} \neq \emptyset$ during the bisection, even if $\partial h(\bart) \cap \R^{>0} \neq \emptyset$ holds for the limit $\bart \in [0,\varepsilon / \| \tilv \|]$ of $(t_j)_j$. 
    \item The subgradient $\xi'$ evaluated in step 2 may not correspond to a subgradient $\langle \xi', \tilv \rangle + c \| \tilv \|^2 \in \partial h(t_j)$, since we do not have equality in \eqref{eq:chain_rule_h}. So one could have $\langle \xi', \tilv \rangle \leq -c\| \tilv \|^2$ even when $\partial h(t_j) \subseteq \R^{>0}$.
\end{enumerate}
In \cite{G2022}, Example 4.3.4, an example was constructed for which Algorithm \ref{algo:old_bisection} does not terminate with a function $h$ that is not semismooth (cf. \cite{M1977b}). In the following, we will construct a more nuanced example that shows that the algorithm may also fail for semismooth functions. 

\begin{example} \label{example:counterexample}
    For $i \in \N \cup \{ 0 \}$ let
    \begin{align*}
        x_i^1 &:= 1 - 7 \cdot 2^{-i-3}, \ \varphi_i^1 := 1 - 9 \cdot 2^{-2i - 3}, \\
        x_i^2 &:= 1 - 5 \cdot 2^{-i-3}, \ \varphi_i^2 := 1 - 3 \cdot 2^{-2i - 4}.
    \end{align*}
    Then $x_i^1 < x_i^2 < x_{i+1}^1$ for all $i \in \N \cup \{ 0 \}$. We construct a function $\varphi : \R \rightarrow \R$ as follows: For $x < 0$ let $\varphi(x) := -\frac{1}{2} x$, for $x \geq 1$ let $\varphi(x) := 1$ and on $[0,1)$ let $\varphi$ be the piecewise linear function with $\varphi(0) = 0$ and 
    \begin{align*}
        \varphi(x_i^1) = \varphi_i^1 \quad \text{and} \quad \varphi(x_i^2) = \varphi_i^2 \quad \forall i \in \N \cup \{ 0 \}.
    \end{align*}
    The graph of $\varphi$ is shown in Figure \ref{fig:counterexample}(a). Figure \ref{fig:counterexample}(b) shows the gradient of $\varphi$ at points where $\varphi$ is differentiable, and a vertical line from smallest to the largest subgradient at points where $\varphi$ is not differentiable. 
    \begin{figure}
        \centering
        \parbox[b]{0.32\textwidth}{
            \centering 
            \includegraphics[width=0.32\textwidth]{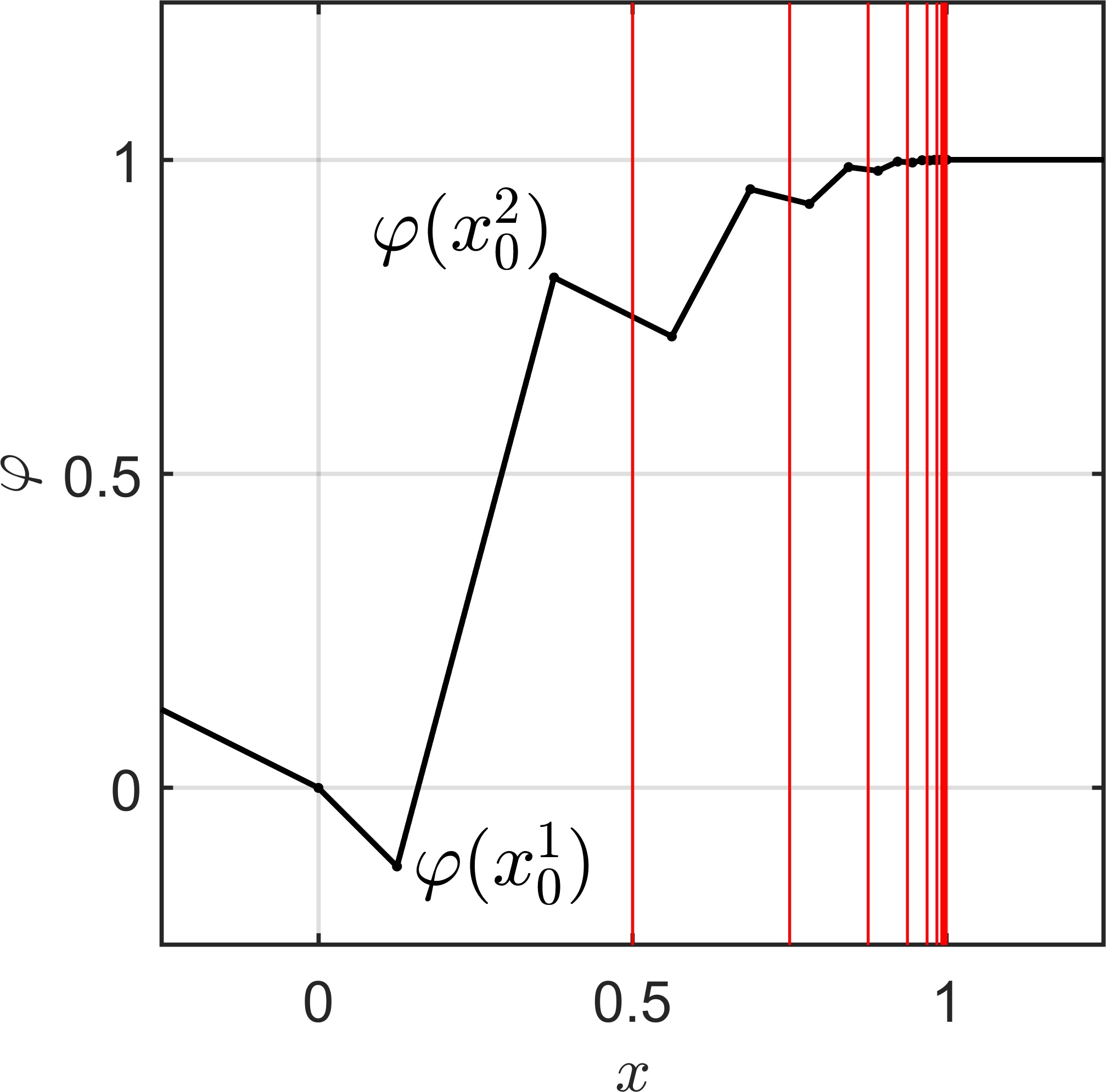}\\
            (a)
		}
        \parbox[b]{0.32\textwidth}{
			\centering 
			\includegraphics[width=0.32\textwidth]{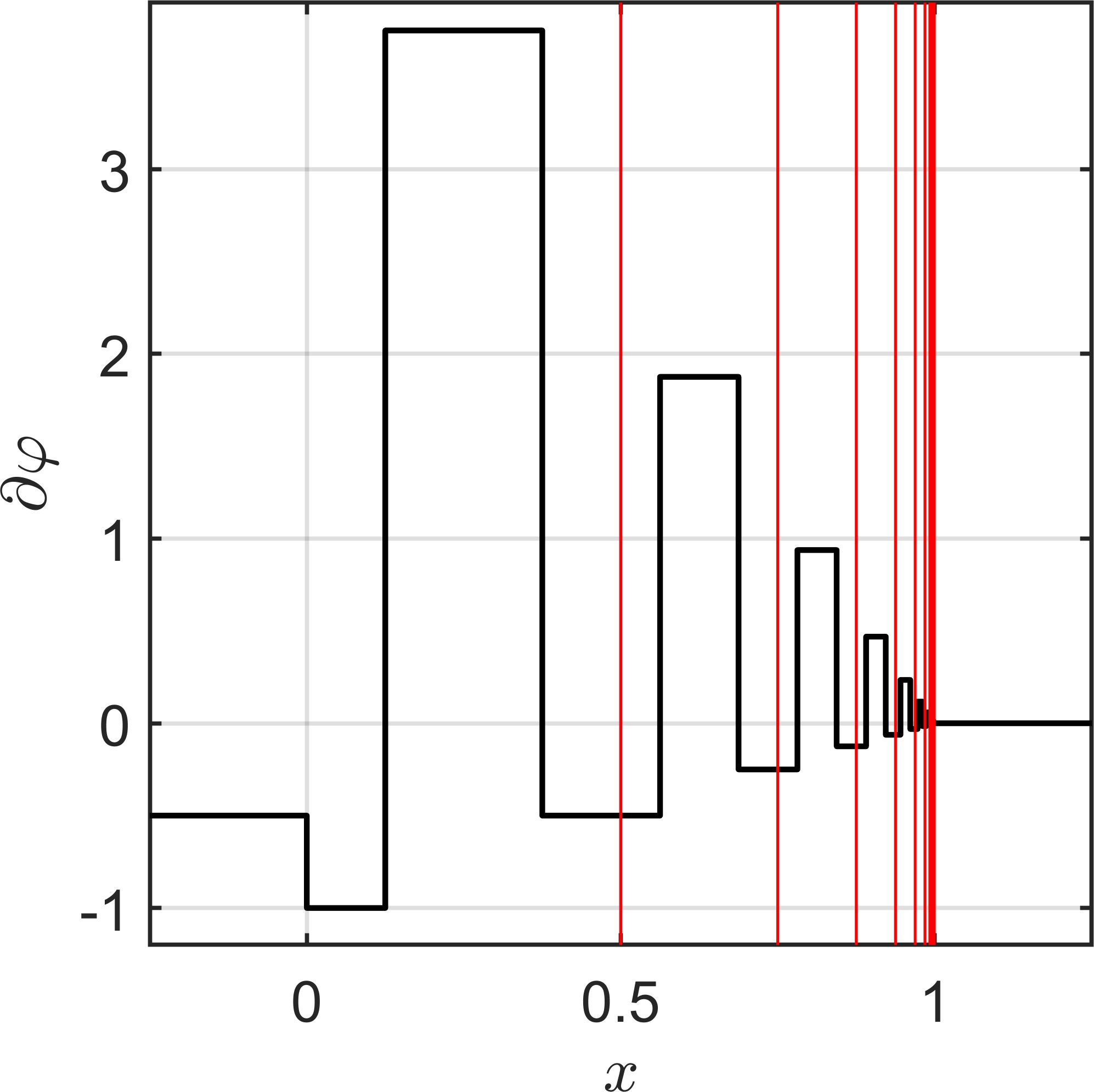}\\
            (b)
		}
        \parbox[b]{0.32\textwidth}{
			\centering 
			\includegraphics[width=0.32\textwidth]{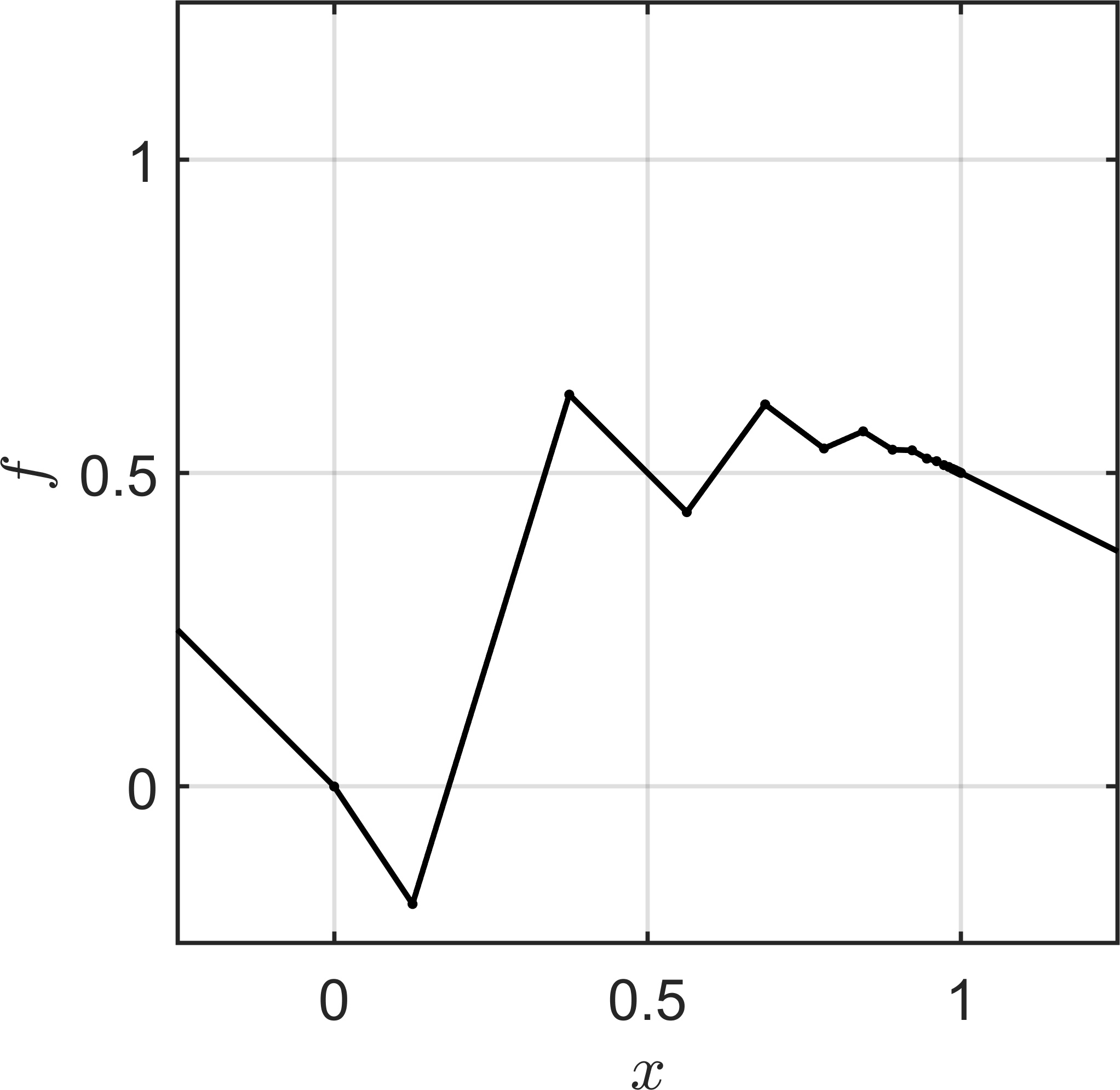}\\
            (c)
		}
        \caption{The graphs of (a) $\varphi = h$, (b) $\partial \varphi = \partial h$ and (c) $f$ in Example \ref{example:counterexample}. The red lines show the values of $t_j = 1 - 2^{-j}$ for $j \in \N$.}
        \label{fig:counterexample}
    \end{figure}
    We see that in the limit $x \rightarrow 1$, all subgradients tend to $0$. Based on this observation, it can be shown that $\varphi$ is semismooth. \\
    Now let 
    \begin{align*}
        f : \R \rightarrow \R, \quad x \mapsto \varphi(x) - \frac{1}{2} x
    \end{align*}
    as shown in Figure \ref{fig:counterexample}(c). Let $x_0 = 0$, $\varepsilon = 1$ and $c = 1/2$. Assume that we have evaluated the subgradient $\xi := -1 \in \partial f(0) = [-3/2,-1]$. Then for $W = \{ \xi \}$, the direction $\tilv$ from \eqref{eq:def_tilv} is simply $\tilv = -\xi = 1$. When checking whether $\tilv$ yields sufficient decrease, we see that
    \begin{align*}
        f \left( x_0 + \frac{\varepsilon}{\| \tilv \|} \tilv \right) - f(x_0) = f \left( 1 \right) - f(0) = \frac{1}{2} > -\frac{1}{2} = -c \varepsilon \| \tilv \|,
    \end{align*}
    i.e., $\tilv$ does not yield sufficient (or even any) decrease. \\
    For Algorithm \ref{algo:old_bisection} we have
    \begin{align*}
        h(t) = f(x_0 + t \tilv) - f(x_0) + c t \| \tilv \|^2 = f(t) + \frac{1}{2} t = \varphi(t).
    \end{align*}
    Since $1 = h(1) = h(b_1) > h(t)$ for all $t \in [0,1)$, we have
    \begin{align*}
        a_j = 1 - 2^{-j+1}, \ b_j = 1, \ t_j = 1 - 2^{-j} \quad \forall j \in \N,
    \end{align*}
    as indicated in Figure \ref{fig:counterexample}(a) and (b). By construction (cf. Figure \ref{fig:counterexample}(b)), $f$ is continuously differentiable at all $x_0 + t_j \tilv = t_j$ and
    \begin{align*}
        \partial f(x_0 + t_j \tilv) = \{ \nabla f(t_j) \} = \left\{ -2^{-j} - \frac{1}{2} \right\} \quad \forall j \in \N,
    \end{align*}
    so
    \begin{align*}
        \langle \xi', \tilv \rangle = -2^{-j} - \frac{1}{2} < -\frac{1}{2} = -c \| \tilv \|^2 \quad \forall j \in \N.
    \end{align*}
    Thus, Algorithm \ref{algo:old_bisection} does not terminate.
    
\end{example}

Note that in the previous example, only the nonsmoothness of $f$ at $x = 1$ is relevant for the failure of the algorithm. Thus, by ``smoothing'' the objective $f$ at all kinks except $1$, one could even construct a semismooth function which is continuously differentiable everywhere outside of a single point for which the algorithm fails.
    
\section{Improved bisection method} \label{sec:improved_bisection_method}

In this section, we propose a slightly modified version of Algorithm \ref{algo:old_bisection} and prove termination for the case where the objective $f$ is weakly lower semismooth. For any $x \in \R^n$, we assume that we are only able to evaluate a single, arbitrary subgradient of the locally Lipschitz continuous $f$ at $x$. 

\subsection{Derivation of the improved method}

Roughly speaking, the idea is to use a smaller parameter $\tilc < c$ in $h$ (cf. \eqref{eq:def_h}) to try to find a new subgradient that satisfies a stricter version of inequality \eqref{eq:new_subgradient_ineq}. This will solve the first of the issues described in Section \ref{subsec:deterministic_gradient_sampling}, as even points in which $\partial h$ is negative but close to zero then suffice to find subgradients that satisfy the weaker requirement with respect to $c$. 

More precisely, note that $\tilv$ is an unacceptable descent direction if and only if 
\begin{align*}
    &f \left( x_0 + \frac{\varepsilon}{\| \tilv \|} \tilv \right) > f(x_0) - c \varepsilon \| \tilv \| \\
    \Leftrightarrow \quad & \cmin := -\frac{f \left( x_0 + \frac{\varepsilon}{\| \tilv \|} \tilv \right) - f(x_0)}{\varepsilon \| \tilv \|} < c.
\end{align*}
Thus, if $\tilv$ is an unacceptable direction, then it is also unacceptable if we replace $c$ in \eqref{eq:sufficient_descent} by any $\tilc \in (\cmin,c) \neq \emptyset$. In other words, we could apply Algorithm \ref{algo:old_bisection} for any $\tilc \in (\cmin,c)$ and the method would still produce sequences with $\tilh(a_j) < \tilh(b_j)$ for all $j \in \N$, where, analogously to \eqref{eq:def_h},
\begin{align*}
    \tilh : \R \rightarrow \R, \quad t \mapsto f(x_0 + t \tilv) - f(x_0) + \tilc t \| \tilv \|^2.
\end{align*}
In step $3$, the method would check whether $\langle \xi', \tilv \rangle > - \tilc \| \tilv \|^2$. But since $\tilc < c$, this inequality is stricter than \eqref{eq:new_subgradient_ineq}. Thus, instead, we apply Algorithm \ref{algo:old_bisection} with only $h$ being replaced by $\tilh$ and the rest unchanged. In terms of the subdifferential of $\tilh$, this means that the method may stop as soon as 
\begin{align} \label{eq:subdiff_h_stop_lower_bound}
    \exists g \in \partial \tilh(t_j) : g = \langle \xi', \tilv \rangle + \tilc \| \tilv \|^2 > (\tilc - c) \|\tilv\|^2,
\end{align}
where $(\tilc - c) \|\tilv\|^2 < 0$. For clarity, this modified algorithm is denoted in Algorithm \ref{algo:new_bisection}.

\begin{algorithm}
	\caption{Improved computation of new $\varepsilon$-subgradients}
	\label{algo:new_bisection}
	\begin{algorithmic}[1] 
		\Require Point $x_0 \in \R^n$, radius $\varepsilon > 0$, descent parameters $c \in (0,1)$, $\tilc \in (\cmin,c)$, direction $\tilv \in \R^n \setminus \{ 0 \}$ violating \eqref{eq:sufficient_descent}.
		\State Initialize $j = 1$, $a_1 = 0$, $b_1 = \frac{\varepsilon}{\| \tilv \|}$ and $t_1 = \frac{1}{2} (a_1 + b_1)$.
		\State Compute $\xi' \in \partial f(x_0 + t_j \tilv)$.
		\State If $\langle \xi', \tilv \rangle > -c \| \tilv \|^2$ then stop.
		\State If $\tilh(b_j) > \tilh(t_j)$ then set $a_{j+1} = t_j$ and $b_{j+1} = b_j$. Otherwise set $a_{j+1} = a_j$ and $b_{j+1} = t_j$. 
		\State Set $t_{j+1} = \frac{1}{2}(a_{j+1}+b_{j+1})$, $j = j+1$ and go to step 2.
	\end{algorithmic} 
\end{algorithm}

\subsection{Proof of termination}

We begin the analysis of Algorithm \ref{algo:new_bisection} with some simple, technical results.
\begin{lemma} \label{lem:seq_properties}
    If Algorithm \ref{algo:new_bisection} does not terminate, then
    \begin{enumerate}
        \item[(i)] $(a_j)_j$, $(b_j)_j$ and $(t_j)_j$ have the same limit $\bart \in [0,\varepsilon / \| \tilv \|]$,
        \item[(ii)] $\bart \in [a_j,b_j]$ for all $j \in \N$,
        \item[(iii)] $(\tilh(b_j))_j$ is monotonically increasing and $\tilh(a_j) < \tilh(b_j)$ for all $j \in \N$,
        \item[(iv)] $t_j < \bart$ for infinitely many $j \in \N$.
    \end{enumerate}
\end{lemma}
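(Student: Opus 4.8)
The plan is to treat parts (i) and (ii) as routine bookkeeping about the nested intervals produced by the bisection, part (iii) as a short induction anchored at $j=1$ by the defining properties of $\tilc$ and of $\tilv$, and part (iv) — the delicate one — as an argument by contradiction that exploits the continuity of $\tilh$.

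First I would prove (i) and (ii) together. By a trivial induction on step~4 of Algorithm~\ref{algo:new_bisection}, one has $a_j \le a_{j+1} \le b_{j+1} \le b_j$ and $b_{j+1} - a_{j+1} = \tfrac12(b_j - a_j)$ for every $j$, hence $b_j - a_j = 2^{-(j-1)}\varepsilon/\|\tilv\| \to 0$. Since $(a_j)_j$ is nondecreasing and bounded above by $b_1 = \varepsilon/\|\tilv\|$ and $(b_j)_j$ is nonincreasing and bounded below by $a_1 = 0$, both converge, and since their difference vanishes they share a common limit $\bart$; as $t_j = \tfrac12(a_j+b_j) \in [a_j,b_j]$, also $t_j \to \bart$, and $\bart \in [0,\varepsilon/\|\tilv\|]$ because $0 = a_1 \le a_j$ and $b_j \le b_1 = \varepsilon/\|\tilv\|$. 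This gives (i), and (ii) follows at once from monotonicity, which yields $a_j \le \bart \le b_j$ for all $j$.

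Next, for (iii), I would induct on $j$. For $j=1$ we have $\tilh(a_1) = \tilh(0) = 0$, while $\tilh(b_1) = f\!\left(x_0 + \tfrac{\varepsilon}{\|\tilv\|}\tilv\right) - f(x_0) + \tilc\,\varepsilon\|\tilv\|$, which is strictly positive because $\tilc > \cmin = -\big(f(x_0+\tfrac{\varepsilon}{\|\tilv\|}\tilv)-f(x_0)\big)/(\varepsilon\|\tilv\|)$; hence $\tilh(a_1) < \tilh(b_1)$. For the step, assume $\tilh(a_j) < \tilh(b_j)$ and split according to step~4: if $\tilh(b_j) > \tilh(t_j)$ then $a_{j+1} = t_j$, $b_{j+1} = b_j$, so $\tilh(a_{j+1}) = \tilh(t_j) < \tilh(b_j) = \tilh(b_{j+1})$ and $\tilh(b_{j+1}) = \tilh(b_j)$; otherwise $a_{j+1} = a_j$, $b_{j+1} = t_j$, so $\tilh(a_{j+1}) = \tilh(a_j) < \tilh(b_j) \le \tilh(t_j) = \tilh(b_{j+1})$ and $\tilh(b_{j+1}) \ge \tilh(b_j)$. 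In both cases $\tilh(a_{j+1}) < \tilh(b_{j+1})$ and $\tilh(b_{j+1}) \ge \tilh(b_j)$, completing (iii).

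The hard part will be (iv). I would argue by contradiction: suppose $t_j \ge \bart$ for all $j \ge J$. The key claim is that $a_j$ then stabilizes at $\bart$. Indeed, if the algorithm ever takes the branch $a_{j+1} = t_j$ at some $j \ge J$, then $a_{j+1} = t_j \ge \bart$, and by (ii) $a_{j+1} \le \bart$, so $a_{j+1} = \bart$, after which $a_k = \bart$ for all $k \ge j+1$; and if that branch is never taken for $j \ge J$, then $a_j = a_J$ is constant for $j \ge J$, hence equals its limit $\bart$. Either way there is $J' \ge J$ with $a_j = \bart$ for all $j \ge J'$. Now (iii) gives $\tilh(\bart) = \tilh(a_j) < \tilh(b_j)$ for all $j \ge J'$. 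On the other hand, $f$ is locally Lipschitz, hence continuous, so $\tilh$ is continuous and $\tilh(b_j) \to \tilh(\bart)$ by (i); since $(\tilh(b_j))_j$ is nondecreasing by (iii), a convergent nondecreasing sequence has all terms bounded above by its limit, so $\tilh(b_j) \le \tilh(\bart)$ for every $j$ — contradicting $\tilh(\bart) < \tilh(b_j)$. Hence $t_j < \bart$ for infinitely many $j$, which is (iv). The only genuinely nontrivial point is the stabilization of $a_j$ at $\bart$; once that is in hand, everything reduces to combining the monotonicity of $(\tilh(b_j))_j$ from (iii) with the continuity of $\tilh$.
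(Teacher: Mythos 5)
Your proof is correct and follows essentially the same route as the paper's: nested-interval bookkeeping for (i)--(ii), induction on step~4 for (iii), and for (iv) the contradiction obtained from the stabilization of $(a_j)_j$ at $\bart$ combined with the monotonicity of $(\tilh(b_j))_j$ and continuity of $\tilh$. The only (harmless) differences are that you spell out the base case $\tilh(a_1)<\tilh(b_1)$ via $\tilc>\cmin$ explicitly and handle the possible iteration with $t_j=\bart$ directly instead of discarding it as a one-time event.
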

\begin{proof}
    (i) By construction $(a_j)_j$ is monotonically increasing and $(b_j)_j$ is monotonically decreasing in $[0,\varepsilon / \| \tilv \| ]$, so both sequences converge. Since Algorithm \ref{algo:new_bisection} does not terminate, it holds $\lim_{j \rightarrow \infty} b_j - a_j = 0$, so they must converge to the same limit $\bart \in [0,\varepsilon / \| \tilv \| ]$. Since $t_j \in (a_j,b_j)$ for all $j \in \N$, $(t_j)_j$ must have the same limit. \\
    (ii) Assume that $\bart \notin [a_j,b_j]$ for some $j \in \N$. Then either $\bart < a_j$ or $\bart > b_j$. Due to the monotonicity of $(a_j)_j$ and $(b_j)_j$, this is a contradiction to $\bart$ being the limit of both sequences. \\
    (iii) By construction, the value of $(b_j)_j$ only changes when $\tilh(b_j) \leq \tilh(t_j)$ in step 4. In this case $b_{j+1}$ is set to $t_j$, so we have $\tilh(b_{j+1}) = \tilh(t_j) \geq \tilh(b_j)$ and $\tilh(a_{j+1}) = \tilh(a_j)$. If, on the other hand, $\tilh(b_j) > \tilh(t_j)$, then $a_{j+1} = t_j$, so $\tilh(a_{j+1}) = \tilh(t_j) < \tilh(b_j) = \tilh(b_{j+1})$. The proof follows by induction.  \\
    (iv) Assume that this does not hold. Then there is some $N \in \N$ such that $t_j \geq \bart$ for all $j > N$. By construction of $(t_j)_j$, $t_j = \bart$ may only hold once, so we can assume w.l.o.g. that $t_j > \bart$ for all $j > N$. Since $\bart \in [a_j,b_j]$ for all $j \in \N$ and $t_j$ is the midpoint of $[a_j,b_j]$, this implies that $a_j = a_N$ for all $j > N$. In particular, $\bart = \lim_{j \rightarrow \infty} a_j = a_N$. Since $\tilh(\bart) = \tilh(a_N) < \tilh(b_N)$ and $(\tilh(b_j))_j$ is monotonically increasing with $\lim_{j \rightarrow \infty} \tilh(b_j) = \tilh(\bart)$ due to continuity, this is a contradiction.
\end{proof}

To be able to fix the second of the two issues mentioned in Section \ref{subsec:deterministic_gradient_sampling}, we need a stronger assumption for $f$. An easy way to solve the issue would be to force equality in the chain rule \eqref{eq:chain_rule_h} by assuming that $f$ is \emph{regular} (cf. \cite{C1990}, Definition 2.3.4). While the class of regular functions includes convex functions, even simple nonconvex functions like $x \mapsto -|x|$ are not regular. As such, this assumption would heavily restrict the applicability of the method. Fortunately, we do not actually need equality in \eqref{eq:chain_rule_h} as we are only interested in the behavior of $\langle \partial f(x_0 + t \tilv), \tilv \rangle$ for $t \rightarrow \bart$, and not necessarily in $\langle \partial f(x_0 + \bart \tilv), \tilv \rangle$ itself. 
Thus, we will see that it suffices to assume that $f$ is \emph{weakly lower semismooth} \cite{M1977a,LO2013}, which means that it is locally Lipschitz and for $x \in \R^n$, $v \in \R^n$ and sequences $(s_i)_i \in \R^{>0}$, $(\xi_i)_i \in \R^n$ with $s_i \searrow 0$ and $\xi_i \in \partial f(x + s_i v)$ for all $i \in \N$, it holds
\begin{align} \label{eq:def_f_wl_semismooth}
    \limsup_{i \rightarrow \infty} \ \langle \xi_i, v \rangle \leq \liminf_{s \searrow 0} \frac{f(x + sv) - f(x)}{s}.
\end{align}
Roughly speaking, weak lower semismoothness means that there is a semicontinuous relationship between directional derivatives and sequences of subgradients (via the inner product). In our case, we are interested in inequality \eqref{eq:def_f_wl_semismooth} for $x = x_0 + \bart \tilv$ and $v = -\tilv$. This will give us a lower estimate for $\langle \xi',\tilv \rangle$ in step 3 of Algorithm \ref{algo:new_bisection}, which we can use to show termination.

To this end, we will first derive an upper bound for the right-hand side of \eqref{eq:def_f_wl_semismooth} in the following lemma.
\begin{lemma} \label{lem:f_wl_semismooth_prop}
    Assume that Algorithm \ref{algo:new_bisection} does not terminate. Let $\bart$ as in Lemma \ref{lem:seq_properties}. Then 
    \begin{align} \label{eq:f_wl_semismooth_prop}
        \liminf_{s \searrow 0} \frac{f(x_0 + \bart \tilv - s \tilv) - f(x_0 + \bart \tilv)}{s} \leq \tilc \| \tilv \|^2.
    \end{align}
\end{lemma}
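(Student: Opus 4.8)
The plan is to translate the whole statement into a claim about the auxiliary function $\tilh$ and then read off the conclusion from the bisection bookkeeping already collected in Lemma~\ref{lem:seq_properties}. The starting observation is that $f(x_0 + t\tilv) = \tilh(t) + f(x_0) - \tilc t \|\tilv\|^2$ for every $t$, so that for all $s > 0$,
\begin{align*}
    \frac{f(x_0 + \bart\tilv - s\tilv) - f(x_0 + \bart\tilv)}{s} = \frac{\tilh(\bart - s) - \tilh(\bart)}{s} + \tilc\|\tilv\|^2 .
\end{align*}
Hence \eqref{eq:f_wl_semismooth_prop} is equivalent to $\liminf_{s \searrow 0} \frac{\tilh(\bart - s) - \tilh(\bart)}{s} \leq 0$, and for this it suffices to exhibit a sequence $s_i \searrow 0$ along which $\tilh(\bart - s_i) \leq \tilh(\bart)$.

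To produce such a sequence I would use Lemma~\ref{lem:seq_properties}(iv): there are infinitely many indices $j$ with $t_j < \bart$, and along this set of indices Lemma~\ref{lem:seq_properties}(i) gives $t_j \to \bart$ from below, so $s_j := \bart - t_j$ tends to $0$ from the right. The key structural point is that whenever $t_j < \bart$, step~4 of Algorithm~\ref{algo:new_bisection} is forced to set $a_{j+1} = t_j$ and $b_{j+1} = b_j$: the only alternative outcome would be $b_{j+1} = t_j < \bart$, which contradicts $\bart \in [a_{j+1}, b_{j+1}]$ from Lemma~\ref{lem:seq_properties}(ii). Consequently, by Lemma~\ref{lem:seq_properties}(iii), $\tilh(\bart - s_j) = \tilh(t_j) = \tilh(a_{j+1}) < \tilh(b_{j+1})$.

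It remains to bound $\tilh(b_{j+1})$ from above. Here I would invoke that $(\tilh(b_j))_j$ is monotonically increasing (Lemma~\ref{lem:seq_properties}(iii)) and converges to $\tilh(\bart)$ by continuity of $\tilh$ together with Lemma~\ref{lem:seq_properties}(i); a monotone increasing sequence lies below its limit, so $\tilh(b_{j+1}) \leq \tilh(\bart)$. Combining this with the previous step gives $\tilh(\bart - s_j) < \tilh(\bart)$ along the chosen subsequence, hence $\frac{\tilh(\bart - s_j) - \tilh(\bart)}{s_j} < 0$, and letting $j \to \infty$ yields $\liminf_{s \searrow 0} \frac{\tilh(\bart - s) - \tilh(\bart)}{s} \leq 0$, which by the reformulation above is exactly \eqref{eq:f_wl_semismooth_prop}.

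I do not expect a genuine obstacle: the result is essentially a consequence of the bisection mechanics. The one place that needs a little care is the implication ``$t_j < \bart \Rightarrow a_{j+1} = t_j$'', which is what lets us sandwich $\tilh(t_j)$ strictly below $\tilh(b_{j+1}) \leq \tilh(\bart)$; the remaining work is the purely algebraic passage between $f$ and $\tilh$. It is also worth checking that no boundary issue arises when $\bart = 0$ or $\bart = \varepsilon/\|\tilv\|$ — the former is in fact excluded by Lemma~\ref{lem:seq_properties}(iv) since $t_j > 0$ always, and in the latter case the difference quotient only evaluates $f$ slightly to the left of $x_0 + \bart\tilv$, so the fact that $f$ is defined on all of $\R^n$ makes this harmless.
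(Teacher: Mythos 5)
Your proof is correct and follows essentially the same route as the paper: both pick the indices with $t_j < \bart$ from Lemma \ref{lem:seq_properties}(iv), use the bisection bookkeeping plus monotonicity and continuity to get $\tilh(t_j) < \tilh(b_j) \leq \tilh(\bart)$, and then translate the resulting negative difference quotient of $\tilh$ back into the stated bound for $f$. The only difference is presentational (you reformulate in terms of $\tilh$ first and spell out why $t_j < \bart$ forces $a_{j+1} = t_j$, which the paper states more tersely).
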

\begin{proof}
    By Lemma \ref{lem:seq_properties} we have $t_j < \bart$ for infinitely many $j \in \N$. Let $(j_i)_i$ be the sequence of such $j$. Note that monotonicity of $(\tilh(b_j))_j$ and $t_{j_i} < \bart$ imply that
    \begin{align*}
        \tilh(\bart) = \lim_{j \rightarrow \infty} \tilh(b_j) \geq \tilh(b_{j_i}) > \tilh(t_{j_i}) \quad \forall i \in \N.
    \end{align*}
    In particular, writing $f(x_0 + t_{j_i} \tilv) = f(x_0 + \bart \tilv - (\bart - t_{j_i}) \tilv)$, it holds
    \begin{align*}
        & 0 > \tilh(t_{j_i}) - \tilh(\bart) = f(x_0 + \bart \tilv - (\bart - t_{j_i}) \tilv) - f(x_0 + \bart \tilv) + \tilc (t_{j_i} - \bart) \| \tilv \|^2 \\
        \Leftrightarrow \quad& \frac{f(x_0 + \bart \tilv - (\bart - t_{j_i}) \tilv) - f(x_0 + \bart \tilv)}{\bart - t_{j_i}} < \tilc \| \tilv \|^2
    \end{align*}
    for all $i \in \N$. Since $\bart - t_{j_i} \searrow 0$ and the limit inferior is taken in \eqref{eq:f_wl_semismooth_prop}, this completes the proof.
\end{proof}%
The previous lemma enables us to prove our main result.
\begin{theorem}%
    Assume that $f$ is weakly lower semismooth. Then Algorithm \ref{algo:new_bisection} terminates.%
\end{theorem}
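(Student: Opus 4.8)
The plan is to argue by contradiction: assume Algorithm \ref{algo:new_bisection} does not terminate and derive an impossible inequality between $c$ and $\tilc$. Since the algorithm never stops, step 3 is never triggered, so for every $j \in \N$ the computed subgradient $\xi'_j \in \partial f(x_0 + t_j \tilv)$ (I write $\xi'_j$ since the algorithm reuses the name $\xi'$ each iteration) satisfies $\langle \xi'_j, \tilv \rangle \leq -c \| \tilv \|^2$. At the same time, Lemma \ref{lem:seq_properties} supplies a common limit $\bart$ of $(a_j)_j$, $(b_j)_j$, $(t_j)_j$ together with an infinite set $J := \{ j \in \N : t_j < \bart \}$, and Lemma \ref{lem:f_wl_semismooth_prop} controls the relevant one-sided difference quotient of $f$ at $x_0 + \bart \tilv$ in direction $-\tilv$. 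The idea is to feed the subgradients $(\xi'_j)_{j \in J}$ into the definition of weak lower semismoothness to obtain a lower bound on $\liminf_{j \in J} \langle \xi'_j, \tilv \rangle$ that clashes with the uniform upper bound $-c\|\tilv\|^2$.

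Concretely, for $j \in J$ I would write $x_0 + t_j \tilv = x_0 + \bart \tilv - s_j \tilv$ with $s_j := \bart - t_j > 0$; by Lemma \ref{lem:seq_properties}(i) we have $s_j \to 0$ along $J$. Since the definition \eqref{eq:def_f_wl_semismooth} of weak lower semismoothness requires a monotonically decreasing sequence of step sizes, I would first pass to a subsequence $J' \subseteq J$ along which $s_j \searrow 0$ (possible because every sequence of positive reals tending to zero contains a strictly decreasing subsequence). Applying \eqref{eq:def_f_wl_semismooth} with $x = x_0 + \bart \tilv$, $v = -\tilv$, the step sizes $(s_j)_{j \in J'}$, and $\xi_j := \xi'_j \in \partial f(x_0 + \bart \tilv - s_j \tilv)$ then yields
\[
\limsup_{j \in J',\, j \to \infty} \langle \xi'_j, -\tilv \rangle \;\leq\; \liminf_{s \searrow 0} \frac{f(x_0 + \bart \tilv - s \tilv) - f(x_0 + \bart \tilv)}{s} \;\leq\; \tilc \| \tilv \|^2,
\]
where the last step is exactly Lemma \ref{lem:f_wl_semismooth_prop}. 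Rearranging gives $\liminf_{j \in J'} \langle \xi'_j, \tilv \rangle \geq -\tilc \| \tilv \|^2$.

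To finish, I would combine this with the uniform bound: since $\langle \xi'_j, \tilv \rangle \leq -c \| \tilv \|^2$ for every $j \in J' \subseteq \N$, also $\liminf_{j \in J'} \langle \xi'_j, \tilv \rangle \leq -c \| \tilv \|^2$. Hence $-c \| \tilv \|^2 \geq -\tilc \| \tilv \|^2$, and dividing by $\| \tilv \|^2 > 0$ yields $c \leq \tilc$, contradicting the requirement $\tilc \in (\cmin, c)$. Therefore Algorithm \ref{algo:new_bisection} terminates.

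The substantive work has been carried out already in Lemmas \ref{lem:seq_properties} and \ref{lem:f_wl_semismooth_prop}, so what remains is a short chain of inequalities. The only point that needs care is the mismatch between the monotonicity "$s_i \searrow 0$" demanded in \eqref{eq:def_f_wl_semismooth} and the merely convergent (not a priori monotone) sequence $\bart - t_j$ produced by Lemma \ref{lem:seq_properties}(iv); this is why the preliminary thinning to a subsequence $J'$ is needed before weak lower semismoothness can be invoked. Everything else is bookkeeping, using $\tilc < c$ and $\tilv \neq 0$ to turn the two bounds into a genuine contradiction.
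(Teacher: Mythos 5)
Your argument is correct and is essentially the paper's own proof: both feed the subgradients sampled at the iterates $t_{j_i} < \bart$ into the weak lower semismoothness inequality at $x_0 + \bart \tilv$ in direction $-\tilv$ and combine it with Lemma \ref{lem:f_wl_semismooth_prop} to contradict the failure of the stopping test in step 3. Your preliminary thinning to a monotone subsequence is harmless and even slightly more careful than the paper (in fact, since $\bart \in [a_j,b_j]$ forces $a_{j+1} = t_j$ whenever $t_j < \bart$, the steps $\bart - t_{j_i}$ are automatically strictly decreasing, which the paper uses implicitly), and phrasing the contradiction as $c \leq \tilc$ rather than as the algorithm eventually stopping is only cosmetic.
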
%
\begin{proof}%
    Assume that Algorithm \ref{algo:new_bisection} does not terminate. Choose $(j_i)_i$ as in the proof of Lemma \ref{lem:f_wl_semismooth_prop} and let $\xi'_i \in \partial f(x_0 + t_{j_i} \tilv)$ be the subgradient evaluated in step 3 in iteration $j_i$. Let $s_i := \bart -  t_{j_i}$. Then $\xi'_i \in \partial f(x_0 + t_{j_i} \tilv) = \partial f(x_0 + \bart \tilv - s_i \tilv)$ and $s_i \searrow 0$, so by Lemma \ref{lem:f_wl_semismooth_prop} and weak lower semismoothness, it holds 
    \begin{align*}
        \liminf_{i \rightarrow \infty}\ \langle \xi'_i, \tilv \rangle 
        &= -\limsup_{i \rightarrow \infty}\ \langle \xi'_i, -\tilv \rangle
        \geq -\liminf_{s \searrow 0} \frac{f(x_0 + \bart \tilv - s \tilv) - f(x_0 + \bart \tilv)}{s} \\
        &\geq -\tilc \| \tilv \|^2 > -c\| \tilv \|^2,
    \end{align*}
    since $\tilc \in (\cmin,c)$. In particular, we must have 
    \begin{align*}
         \langle \xi'_i, \tilv \rangle > -c\| \tilv \|^2
    \end{align*}
    after finitely many iterations, causing the algorithm to stop in step $3$.
\end{proof}%

\section{Example} \label{sec:Examples}

In this section, we visualize the difference between Algorithms \ref{algo:old_bisection} and \ref{algo:new_bisection} by revisiting Example \ref{example:counterexample}.

\begin{example} \label{example:fixed_counterexample}
    \begin{enumerate}
        \item[(a)] In the situation of Example \ref{example:counterexample}, it holds
        \begin{align*}
            \cmin = -\frac{f(0 + 1) - f(0)}{1} = -\frac{1}{2},
        \end{align*}
        so any $\tilc \in (-1/2,c) = (-1/2,1/2)$ can be chosen for Algorithm \ref{algo:new_bisection}. Figure \ref{fig:fixed_counterexample_1} shows the graphs of $\tilh$ and $\partial \tilh$ when choosing $\tilc = 1/4$.
        \begin{figure}
            \centering
            \parbox[b]{0.49\textwidth}{
                \centering 
                \includegraphics[width=0.325\textwidth]{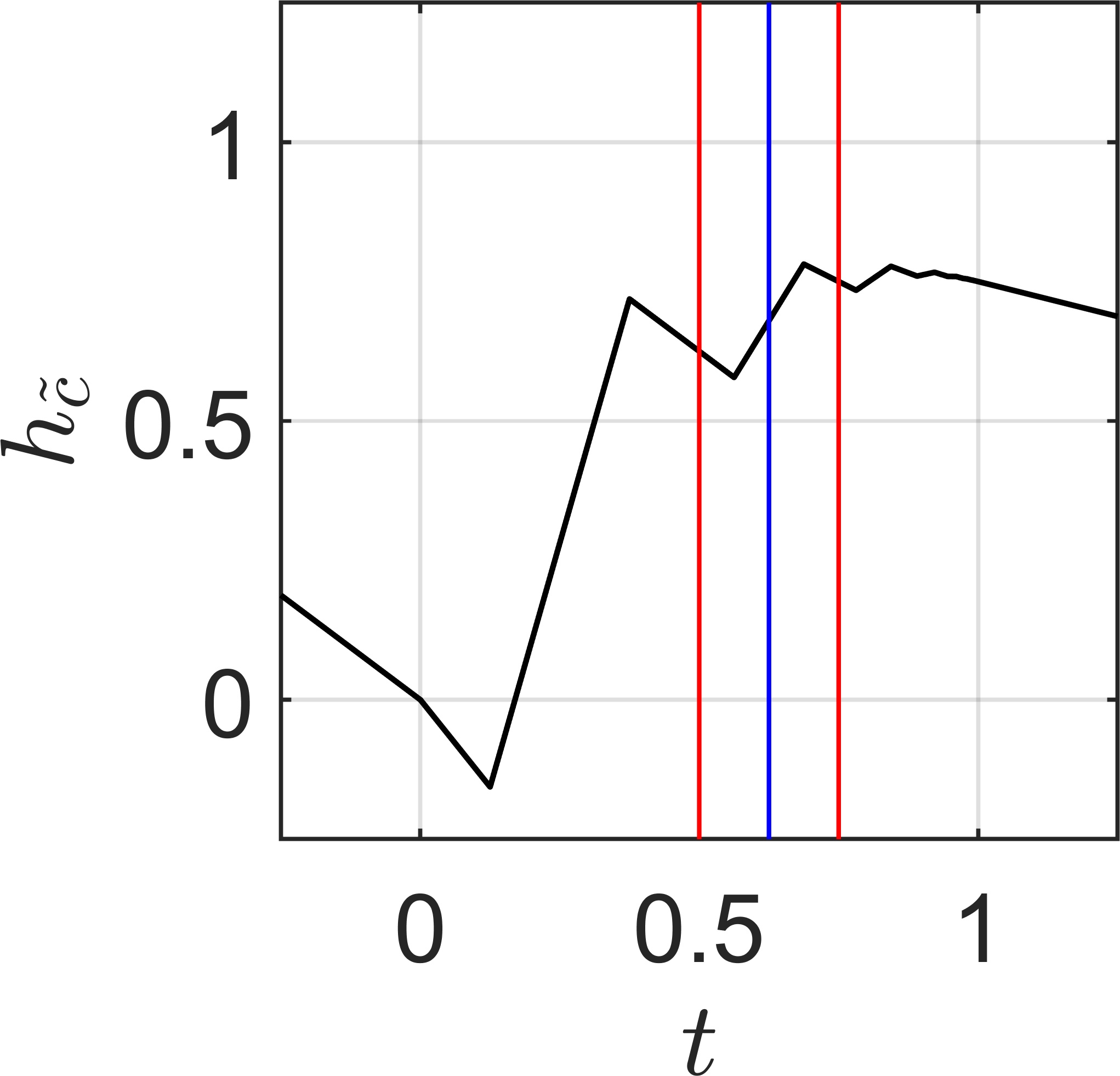}\\
                (a)
		      }
            \parbox[b]{0.49\textwidth}{
                \centering 
                \includegraphics[width=0.325\textwidth]{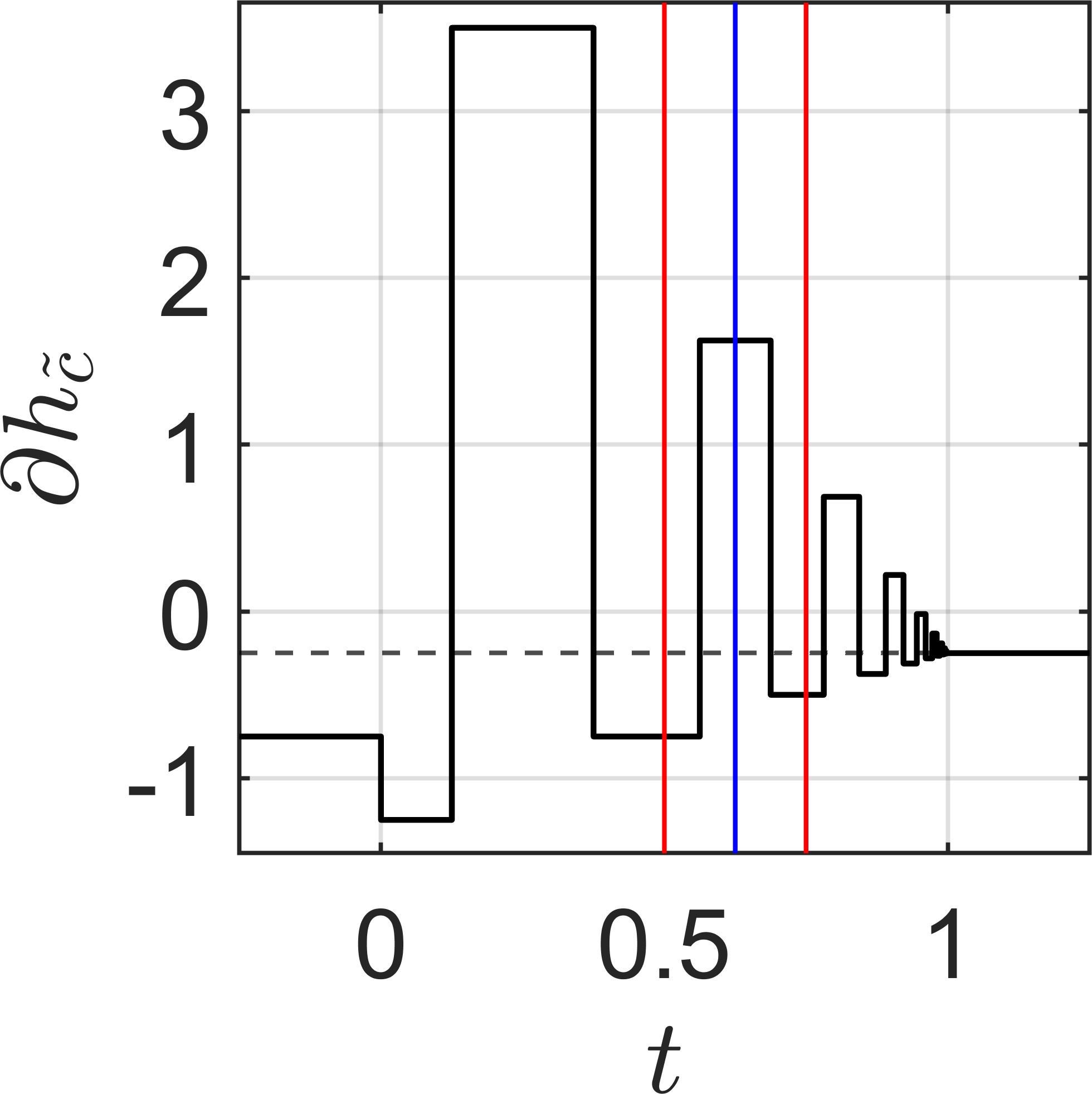}\\
                (b)
		      }
            \caption{The graphs of (a) $\tilh$ and (b) $\partial \tilh$ for $c = 1/2$, $\tilc = 1/4$ in Example \ref{example:fixed_counterexample}. The vertical lines show the sequence $(t_j)_j$ from Algorithm \ref{algo:new_bisection}, with the final value colored in blue. The dashed horizontal line in (b) marks the value $(\tilc - c) \| \tilv \|^2 = -1/4$ above which $\partial \tilh(t_j)$ must lie for the method to stop (cf. \eqref{eq:subdiff_h_stop_lower_bound}).}
            \label{fig:fixed_counterexample_1}
        \end{figure}
        We see that the algorithm terminates after two iterations with $t_3 = 5/8$ and the new $\varepsilon$-subgradient $\xi' = 11/8 \notin \conv(W) = \{ -1 \}$.
        \item[(b)] Note that part (a) essentially solved the problem by simply choosing a different value for $c$ in $h$ (leading to a more well-behaved function $\tilh$). To better visualize differences of Algorithms \ref{algo:old_bisection} and \ref{algo:new_bisection}, assume that we chose $c = 3/4$, such that we may choose $\tilc = 1/2 \in (\cmin,c)$. Then Algorithm \ref{algo:new_bisection} has to deal with the same problematic function as Algorithm \ref{algo:old_bisection} in Example \ref{example:counterexample}. The resulting graphs of $\tilh$ and $\partial \tilh$ are shown in Figure \ref{fig:fixed_counterexample_2}.
        \begin{figure}
            \centering
            \parbox[b]{0.49\textwidth}{
                \centering 
                \includegraphics[width=0.325\textwidth]{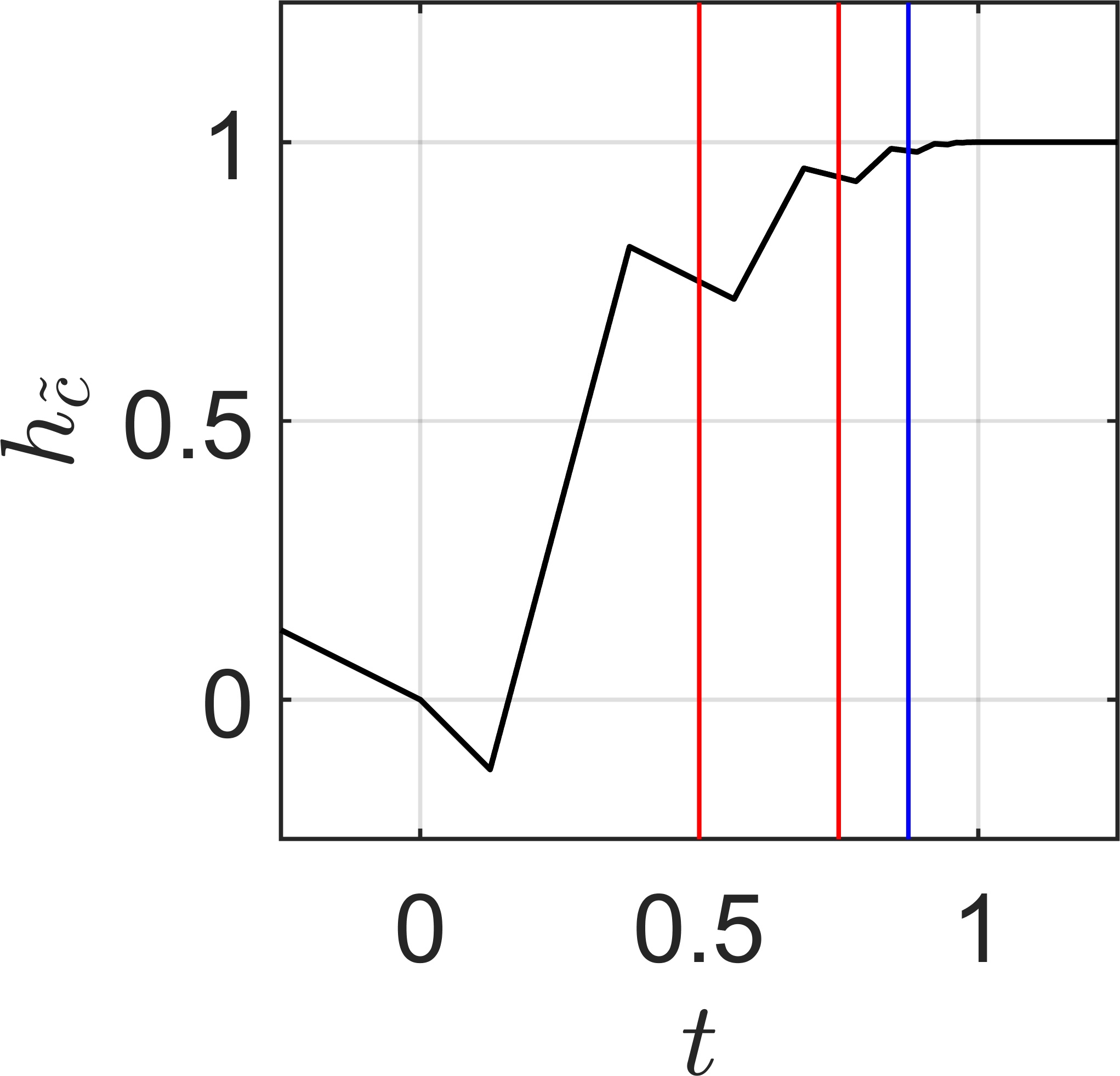}\\
                (a)
		      }
            \parbox[b]{0.49\textwidth}{
                \centering 
                \includegraphics[width=0.325\textwidth]{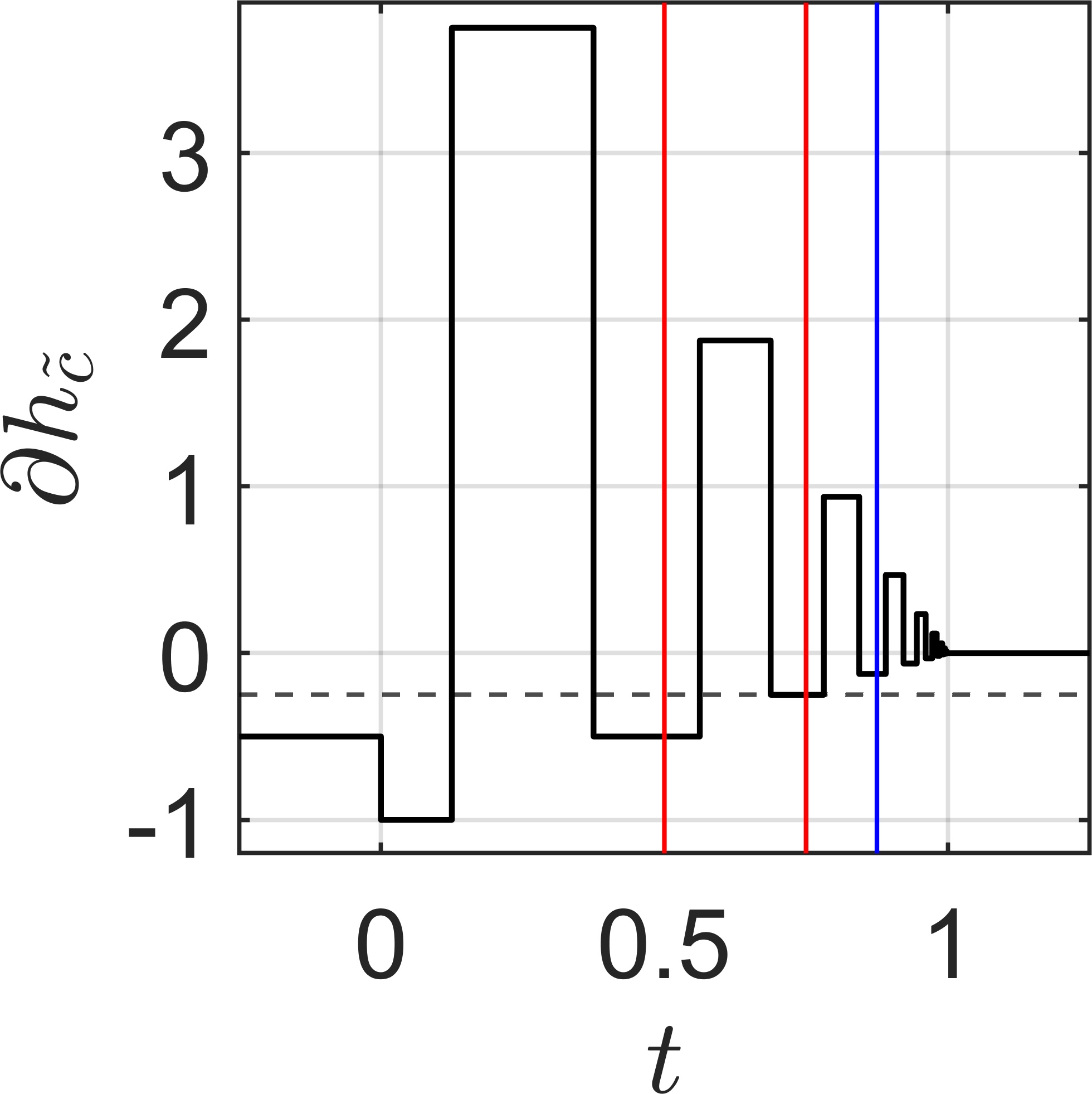}\\
                (b)
		      }
            \caption{Same as Figure \ref{fig:fixed_counterexample_1} but for $\tilc = 1/2$ and $c = 3/4$.}
            \label{fig:fixed_counterexample_2}
        \end{figure}
        Since, in Algorithm \ref{algo:new_bisection}, it is sufficient to have $g > (\tilc - c) \| \tilv \|^2$ for a subgradient $g = \langle \xi', \tilv \rangle + \tilc \| \tilv \|^2 \in \partial \tilh(t_j)$ (cf. \eqref{eq:subdiff_h_stop_lower_bound}), and since all subgradients at $\tilh(t)$ tend to $0$ as $t \rightarrow 1$, the method already stops in $t_3 = 7/8$ with the new $\varepsilon$-subgradient $\xi' = -5/8 \notin \conv(W) = \{ -1 \}$.
    \end{enumerate}
\end{example}

\section{Conclusion and future work} \label{sec:conclusion}

In this article, we showed how the gap in the convergence theory of the deterministic gradient sampling methods from \cite{MY2012,GP2021,G2022} can be closed for weakly lower semismooth functions by a more careful handling of the sufficient decrease condition in the bisection.

For future work, it might be worth to analyze the behavior of Algorithm \ref{algo:new_bisection} in a more general setting. In \cite{BL2021}, the convergence theory for the original gradient sampling method from \cite{BLO2005} was generalized to directionally Lipschitzian functions, and already in \cite{BLO2005}, gradient sampling was successfully applied to even more general, non-Lipschitzian functions.
Furthermore, the general strategy of approximating the $\varepsilon$-subdifferential in a deterministic fashion as in \cite{K2009,MY2012,GP2021,G2022} may lead to interesting new methods when combined with other methods that rely on random sampling, like \cite{CQ2013,CQ2015}. In the long run, we believe that this strategy may lead one step closer to a unified framework for both gradient sampling and bundle methods. For example, when comparing the standard gradient sampling method in \cite{BLO2005} and the bundle method of \cite{K2009}, one could ``hide'' all the null steps in the bundle method in a subroutine and end up with a method that is similar to the gradient sampling method, just with a different way to approximate the $\varepsilon$-subdifferential.

\bmhead{Data Availability}
Data sharing not applicable to this article as no datasets were generated or analyzed during
the current study.

\backmatter

\section*{Declarations}

\bmhead{Conflict of Interest} 
The author has no competing interests to declare that are relevant to the content of
this article.

\begin{appendices}

\section{A drawback of random sampling}\label{sec:appendix}

\begin{example} \label{example:GS_bad}
    For $n > 1$ let $\pr : \R^n \rightarrow \R^{n-1}, x \mapsto (x_1,\dots,x_{n-1})^\top$, be the projection onto the first $n-1$ components. Let
    \begin{align*}
        f : \R^n \rightarrow \R, \quad x \mapsto | x_n - \| \pr(x) \| | + \frac{1}{2} x_n.
    \end{align*}
    Then $f$ is continuously differentiable in $D^1 \cup D^2$, where
    \begin{align*}
        D^1 := \{ x \in \R^n : \pr(x) \neq 0, x_n < \| \pr(x) \| \}, \\
        D^2 := \{ x \in \R^n : \pr(x) \neq 0, x_n > \| \pr(x) \| \}.
    \end{align*}
    For $n = 2$, the graph of $f$ and the sets $D^1$, $D^2$ are shown in Figure \ref{fig:GS_bad}.
    \begin{figure}
        \centering
        \parbox[b]{0.49\textwidth}{
            \centering 
            \includegraphics[width=0.4\textwidth]{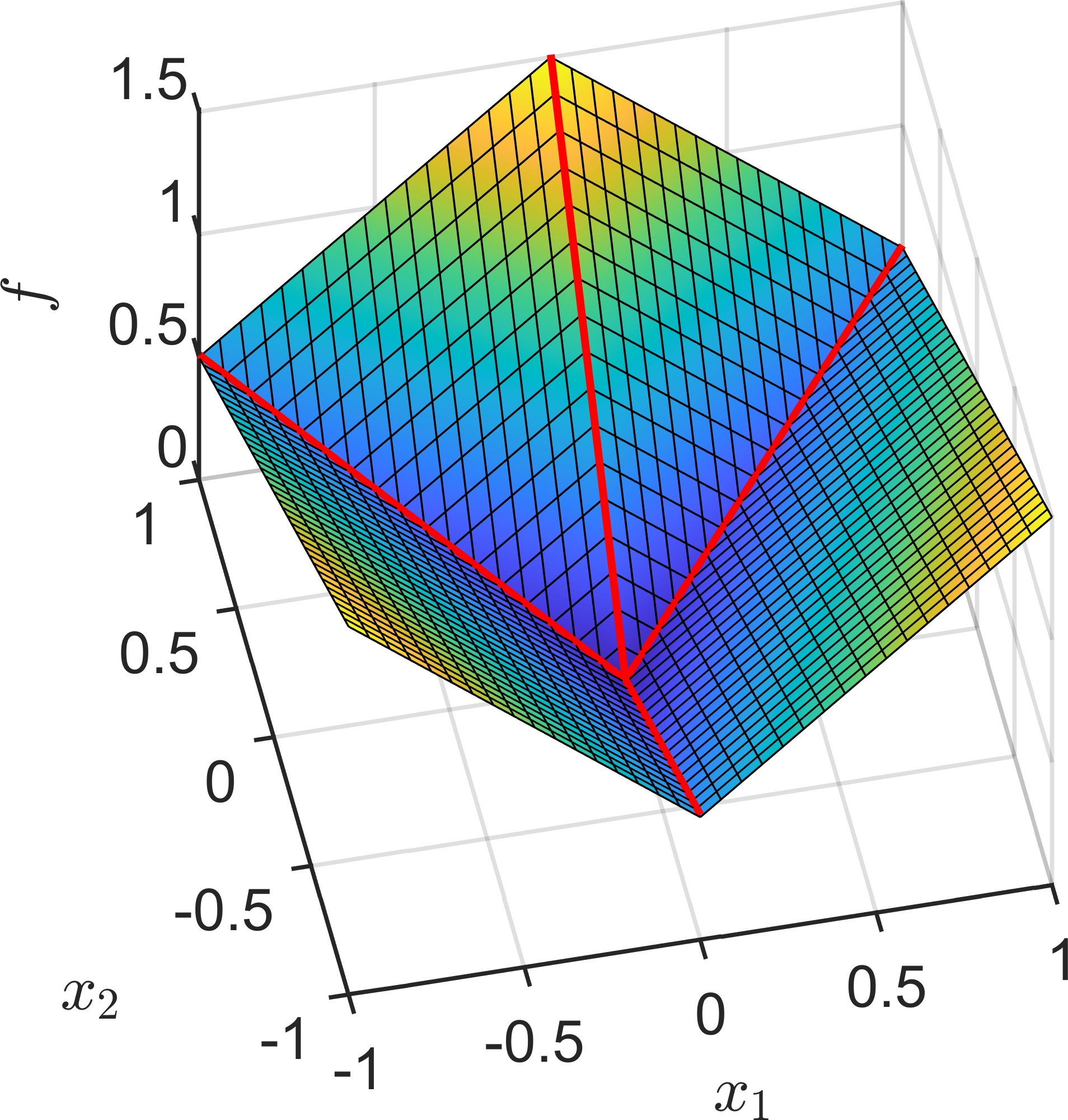}\\
            (a)
		}
        \parbox[b]{0.49\textwidth}{
            \centering 
            \includegraphics[width=0.4\textwidth]{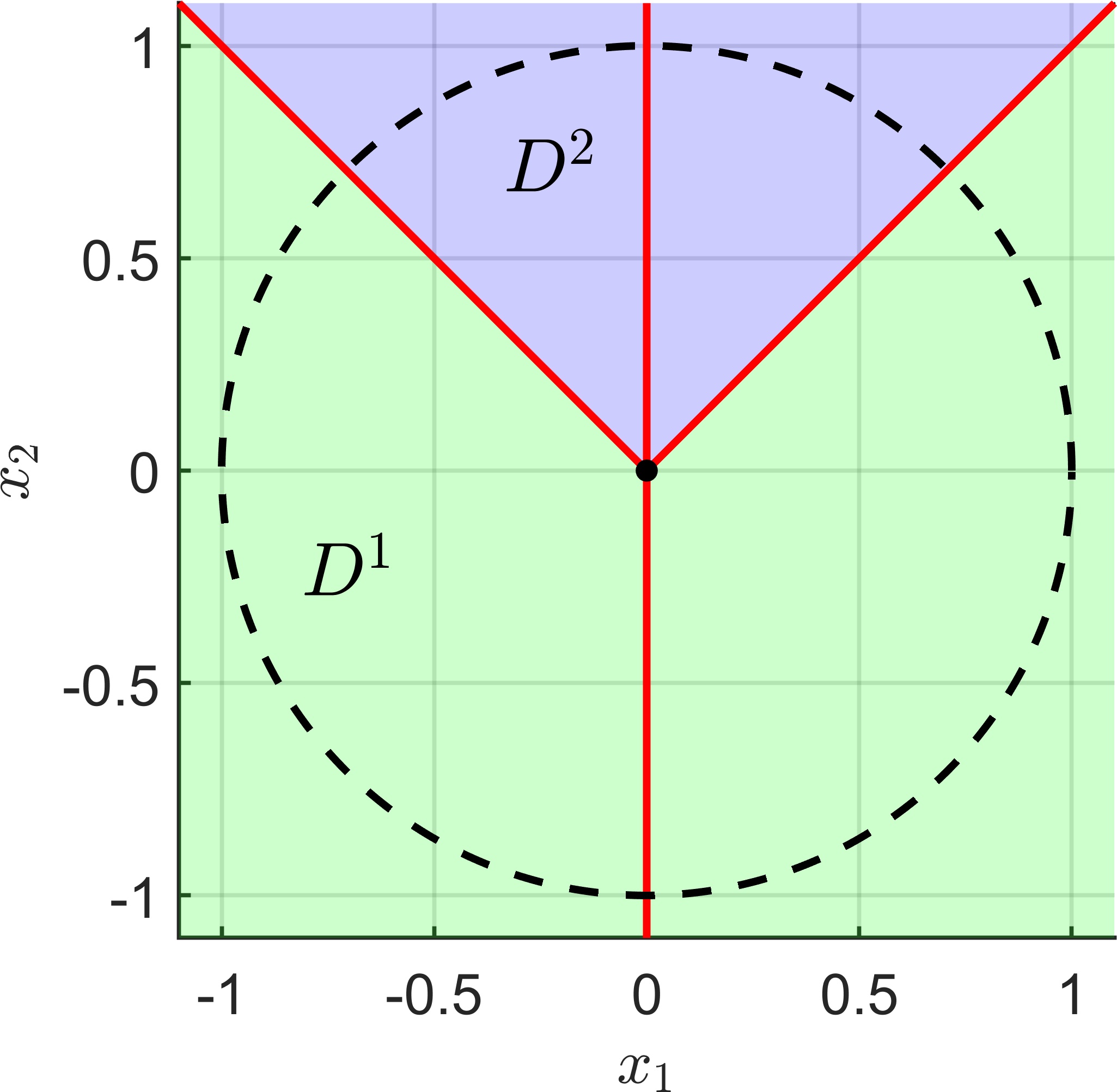}\\
            (b)
		}
        \caption{(a) The graph of $f$ for $n = 2$ in Example \ref{example:GS_bad}. The red lines indicate the points in which $f$ is not differentiable. (b) The boundary of the unit sphere $B_1(0)$ (dashed) and the sets $D^1$ (green) and $D^2$ (blue).}
        \label{fig:GS_bad}
    \end{figure}    
    For the gradient we have
    \begin{align*}
        \nabla f(x) = 
        \begin{pmatrix}
            \pr(x) / \| \pr(x) \| \\
            -1 / 2
        \end{pmatrix}
        \ \forall x \in D^1, \
        \nabla f(x) = 
        \begin{pmatrix}
            -\pr(x) / \| \pr(x) \| \\
            3 / 2
        \end{pmatrix}
        \ \forall x \in D^2.
    \end{align*}
    Thus, it is easy to see that $x = 0$ is a critical point of $f$. In particular, if $x_0 \in \R^n$ and $\varepsilon > 0$ such that $0 \in B_\varepsilon (x_0)$, then the solution of \eqref{eq:qop_bar_v} is $\barv = 0$. \\
    Clearly, to have $v^{\text{GS}} = 0$ in \eqref{eq:def_vGS}, it is necessary to sample at least one gradient from $D_\varepsilon^2 := D^2 \cap B_\varepsilon(x_0)$. For $x_0 = 0$ and $\varepsilon = 1$, the probability that $y \in D_\varepsilon^2$ for a uniformly sampled $y \in B_\varepsilon(x_0)$ can be computed by comparing the hypervolume $V_n(D_\varepsilon^2)$ of $D_\varepsilon^2$ (which can be computed via a partition of $D_\varepsilon^2$ into a hypercone and a hyperspherical cap) to $V_n(B_\varepsilon(x_0))$. In \cite{BLO2005}, $m = 2n$ gradients are sampled in every iteration for the approximation of $\partial_\varepsilon f(x_0)$. The resulting probabilities of having at least one of the $2n$ sample points in $D_\varepsilon^2$ are shown in Table \ref{tab:GS_probs}.
    \begin{table}[h]
        \centering
        \caption{Probability that at least one of the $2n$ sample points lies in $D_\varepsilon^2$ in Example \ref{example:GS_bad}.}
        \begin{tabular}{| c || c | c | c | c | c | c | c|}
            \hline
            $n$   & $2$      & $3$      & $5$      & $10$     & $20$     & $50$                & $100$ \\ 
            \hline
            &&&&&&& \\[-8pt] 
            Prob. & $0.6836$ & $0.6133$ & $0.4502$ & $0.1394$ & $0.0067$ & $3.3 \cdot 10^{-7}$ & $2.2 \cdot 10^{-14}$ \\
            \hline
        \end{tabular}
        \label{tab:GS_probs}
    \end{table}
    We see that when increasing the dimension $n$, it quickly becomes highly unlikely that random gradient sampling correctly identifies $x_0 = 0$ as $\varepsilon$-critical. Note that this is not related to $x_0 = 0$ being a nonsmooth point of $f$, and we get a similar (or even worse) result when choosing some $x_0 \in D^1$ close to zero. (It would just become more difficult to compute the exact probabilities as in the table for $x_0 \neq 0$.) In this case, the method from \cite{BLO2005} would perform descent steps with short step lengths (and little descent), which would require many function evaluations due to the backtracking nature of the line search, without recognizing that the iterates are already $\varepsilon$-critical. \\
    If deterministic sampling is used instead (cf. Section \ref{subsec:deterministic_gradient_sampling}), then a simple computation shows that if the first two subgradients $\xi^1$ and $\xi^2$ were sampled from $D^1$, then the next $\tilv$ from \eqref{eq:def_tilv} (for $W = \{ \xi^1, \xi^2 \}$) must be $\tilv = (0,\dots,0,1/2)^\top$, so the next subgradient is sampled at $x_0 + 1/2 \ \tilv$ (cf. Algorithm \ref{algo:new_bisection}). If $x_0$ is close to zero, then $x_0 + 1/2 \ \tilv \in D^2$ such that a subgradient from $D^2$ is sampled. After at most one additional iteration, the procedure stops and correctly obtains $\tilv = \barv = 0$. Thus, for arbitrary $n$, at most $4$ subgradients have to be sampled when sampling deterministically.
\end{example}

\end{appendices}

\bibliography{references}

\end{document}